\newtheorem{df}{Definition}[section]
\newtheorem{thm}[df]{Theorem}
\newtheorem{lem}[df] {Lemma}
\begin{document}
\setcounter{page}{1}

\title[Nilpotent Lie algebras ]{Joint Spectra and Nilpotent Lie Algebras of Linear Transformations }
\author{Enrico Boasso}

\begin{abstract}Given a complex nilpotent finite dimensional Lie algebra of linear 
transformations $L$, in a complex finite dimensional vector space $E$, 
 we study the joint spectra $Sp(L,E)$, $\sigma_{\delta,k}(L,E)$ and 
$\sigma_{\pi,k}(L,E)$. We compute them and we prove that they all coincide
with the set of weights of $L$ for $E$. We also give a new interpretation
of some basic module operations of the Lie algebra $L$ in terms of the joint spectra. \end{abstract}
\maketitle
\section{ Introduction}    
\indent The Taylor joint spectrum is one of the most dynamic and powerful subjects of
multiparameter spectral theory. This spectrum, which was introduced
in 1970 by J. L. Taylor in [9], associates to an n-tuple $a= (a_1,\ldots ,a_n)$
of mutually commuting bounded linear operators on a Banach space $E$, i. e.,
$a_i\in {\mathcal L}(E)$, the algebra of all bounded linear operators on $E$, and
$[a_i,a_j]=0$, $\forall i,j$, $1\le i,j\le n$, a compact non empty subset of 
$\Bbb C^n$, which we denote by $Sp(a,E)$. Besides, 
when $n=1$, the Taylor joint spectrum reduces to the classical spectrum of a 
single operator and, if $\tilde a =(a_i,\ldots ,a_k)$ is the k-tuple, 
$1\le k\le n-1$, of the first k operators of $a$, then the Taylor joint spectrum
satisfies the so called projection property: $Sp(\tilde a ,E)= \pi Sp(a,E)$, 
where $\pi $  is the projection map of $\Bbb C^n$ onto $\Bbb C^k$.\par

\indent In addition, if $E$ is a complex finite dimensional vector space, M. Ch\`o and M.
Takaguchi computed in [4] the Taylor joint spectum of an n-tuple $a$ of commuting
 operators on $E$ and they obtain that $Sp(a,E) = \sigma_{pt}(a)$, the joint point spectrum of $a$, where 
$\sigma_{pt}(a) = \{ \lambda\in\Bbb C^n\colon \exists\, x \in E, x \ne 0\colon a_j(x)=\lambda_j x,
1\le j\le n\}$. Besides in [6], A. McIntosh, A. Pryde and W. Ricker, as a
consequence of a more general result which concerns infinite dimensional spaces too, also 
computed the Taylor joint spectrum for an n-tuple of linear transformations on
a complex finite dimensional vector space, and they compared it with other 
joint spectra.\par

\indent In [2] we defined a joint spectrum for complex solvable finite 
dimensional Lie algebras of operators $L$, acting on a Banach space $E$, and 
we denoted it by $Sp(L,E)$. We also proved that $Sp(L,E)$ is a compact non empty
subset of $L^*$, and that the projection property for ideals still holds. 
Besides, when $L$ is a commutative algebra, $Sp(L,E)$ reduces to the Taylor 
joint spectrum in the following sense. If $\dim\, L = n$, $\{ a_i\}_{(1\le i\le n)}$
is a basis of $L$ and we consider the n-tuple $a = (a_1,\ldots ,a_n)$, then 
$\{(f(a_1),\ldots ,f(a_n))\colon f\in Sp(L,E)\} = Sp(a,E)$, i. e., $Sp(L,E)$, in terms of the
basis of $L^*$ dual of $\{a_i\}_{(1\le i\le n)}$, coincides with the Taylor
joint spectrum of the n-tuple $a$. Then, the following question arises naturally. If $L$  is a complex Lie 
algebra of linear transformations in a complex finite dimensional vector space, what can be said about its joint spectrum? In this article we compute,
under the above assumptions, the joint spectrum of a nilpotent Lie algebra,
moreover, we extend to the nilpotent case the characterizations of [4] and [6].
In addition, our result has a beautiful feature: $Sp(L,E)$ is the set 
of all weights of the Lie algebra $L$ for the vector space $E$. \par

\indent However, we consider a more general situation. In [1] we introduced
a family of joint spectra for complex solvable finite dimensional Lie algebras
of operators acting on a Banach space, which, when the algebra is a commutative 
one, reduce to the Slodkowski joint spectra in the same sense that we have 
explained for the Taylor joint spectrum, see [8]. We also proved the most 
important spectral properties for these joint spectra: compactness, non emptiness 
and the projection property. In this article we also compute these joint spectra for
complex nilpotent Lie algebras of linear transformations in a complex finite 
dimensional vector spaces. Moreover, we also prove that
all the joint spectra considered in this article coincide with the set of 
 weights of the Lie algebra $L$ for the vector space $E$.\par

\indent  We observe that, though all joint spectra introduced in [1] and [2] were
defined for complex solvable Lie algebras, we restrict ourselves to the nilpotent
case for, in the solvable non nilpotent case our result fails.\par
\indent The paper is organized as follows. In Section 2 we review several 
definitions and results of [1] and [2]. We also review several facts related
to the theory of complex nilpotent Lie algebras of linear transformations
in complex finite dimensional vector spaces. In Section 3 we prove our main
theorems and, with them, we give a proof of the compactness and non emptiness
of the joint spectra, for the case under consideration, different from the one of [1] and [2]. 
We also show an example of a solvable non nilpotent Lie algebra
of linear transformations where our result fails. In Section 4 we 
consider some basic module operations of the Lie algebra $L$ in order to compute several joint 
spectra.\par 
                                                 
\section{ Preliminaries}

\indent We briefly recall several definitions and results related to the 
spectrum of a Lie algebra, see [2]. Indeed, in [2] we considered complex 
solvable finite dimensional Lie algebras of linear bounded 
operators acting on a Banach space, however, for our purpose, in this article
we restrict ourselves to the case of complex finite dimensional nilpotent Lie 
algebras of linear transformations defined on finite dimensional vector 
spaces.\par
\indent From now on, $E$ denotes a complex finite dimensional vector space,
${\mathcal L}(E)$ the algebra of all linear transformations defined on $E$ and 
$L$ a complex nilpotent finite dimensional Lie subalgebra of 
${{\mathcal L}(E)}^{op}$, i. e., the algebra ${{\mathcal L}(E)}$ with its opposite product. 
Such an algebra is called a nilpotent Lie algebra of 
linear transformations in the vector space $E$. If $ \dim L = n$ and $f$ is a 
character of $L$, i. e., $f\in L^*$ and $f(L^2) = 0$, where $L^2 = \{ [x,y]\colon x,
 y \in L\}$, then we consider the following  chain complex, 
$(E\otimes\wedge L, d(f))$, where $\wedge L$ denotes
the exterior algebra of $L$, and $d_p(f)$ is as follows,
$$
d_p (f)\colon E\otimes\wedge^p L\rightarrow E\otimes\wedge^{p-1} L,   
$$
\begin{align*} d_p (f) e\langle x_1\wedge\dots\wedge x_p&\rangle  = \sum_{k=1}^p (-1)^{k+1}e(x_k-f(x_k))\langle x_1\wedge\ldots\wedge\hat{x_k}\wedge\dots\wedge x_p\rangle\\
                                                     & + \sum_{1\le k< l\le p} (-1)^{k+l}e\langle [x_k, x_l]\wedge x_1\ldots\wedge\hat{x_k}\wedge\ldots\wedge \hat{x_l}\wedge\ldots \wedge x_p\rangle ,\\ \end{align*} 

\noindent where $\hat{ }$ means deletion. If $p\le 0$ or $p\ge {n+1}$, 
we  define $d_p (f) =0$.\par
 
\indent  If we denote by $H_*((E\otimes\wedge L,d(f)))$ the homology of the complex
$(E\otimes\wedge L, d(f))$, we may state our first definition.\par
\begin{df} With $E$, $L$ and $f$ as above, the
set $\{f\in L^*\colon  f(L^2) =0, H_*((E\otimes\wedge L,d(f)))\neq 0\}$ is the joint spectrum
of $L$ acting on $E$ and it is denoted by $Sp(L,E)$.\end{df}
\indent As we have said, in [2] we proved that $Sp(L,E)$ is a compact non 
empty subset of $L^*$, which reduces to the Taylor joint spectrum when $L$ is a 
commutative algebra, in the sense explained in the Introduction. Besides, if $I$ is an ideal of $L$ and $\pi$ denotes 
the projection map from $L^*$ to $I^*$, then,
$$
Sp(I,E) = \pi (Sp(L,E)),
$$
i. e., the projection property for ideals still holds. With regard to this
property, we ought to mention the paper of C. Ott [7], who pointed out
a gap in the proof of this result and gave another proof of it. In any case,
the projection property remains true.\par
\indent We now give the definition of the Slodkowski joint spectra for Lie algebras. As
 in [1], we give an homological version. However, as we deal with linear 
transformations defined on finite dimensional vector spaces, we slightly 
modify our definition in order to adapt it to our case, for a complete 
exposition of the subject see [8] and [1].\par
\indent If $L$ and $E$ are as above, let us consider the set
$$
\Sigma_p(L,E) = \{ f\in L^*\colon f(L^2)=0, H_p((E\otimes\wedge L,d(f)))\ne 0\},
$$ 
where  $0\le p\le n$, and $\dim L= n$. We now state our definition of the S\l odkowski 
joint spectra for Lie algebras.\par
\begin{df} With $L$ and $E$ as above, $\sigma_{\delta ,k}$
and $\sigma_{\pi ,k}$ are the following joint spectra,
$$
\sigma_{\delta ,k}(L,E)= \bigcup_{0\le p\le k}\Sigma_p(L,E) ,
\hskip2cm
\sigma_{\pi ,k}(L,E)= \bigcup_{k\le p\le n}\Sigma_p(L,E),
$$
where $0\le k\le n$, and $n= \dim L$.\end{df}
\indent We observe that  $\sigma_{\delta ,n}(L,E)= \sigma_{\pi ,0}(L,E)= Sp(L,E)$. Besides, in [1] we
showed that $ \sigma_{\delta ,k}(L,E)$ and $\sigma_{\pi ,k}(L,E)$, $0\le k\le n$,
are compact non empty subsets of $L^*$, which reduce to the S\l odkowski joint spectra when 
$L$ is a commutative algebra. In addition, they also satisfy the projection 
property for ideals, i. e., if $I$ is an ideal of $L$ and $\pi$ denotes the
projection  map from $L^*$ to $I^*$, then,
$$
\sigma_{\delta ,k}(I,E)= \pi (\sigma_{\delta ,k}(L,E)),
\hskip2cm
\sigma_{\pi ,k}(I,E)= \pi (\sigma_{\pi ,k}(L,E)),
$$
where $k$ and $n$ are as above.\par
\indent We shall have occasion to use the theory of weight spaces, however,
 as we deal with complex nilpotent Lie algebras of linear transformations
in complex finite dimensional vector spaces, we restrict our revision to the most important 
results of the theory, essentially Theorems 7 and 12, Chapter II, of the book 
by  N. Jacobson, `Lie Algebras'. For a complete exposition of the subject see
[5, Chapter II].\par
\indent Let $L$ and $E$ be as above. A weight of $L$ for $E$ is a mapping,
$\alpha: x\to \alpha (x)$, of $L$ into $\Bbb C$ such that there exists a non zero
vector $v$ in $E$ with the property, $(x-\alpha (x))^{m_{v,x}}(v)= 0$, for
all $x$ in $L$ and where $m_{v,x}$ belongs to $\Bbb N$. The set of vectors, zero 
included,which satisfy this condition is a subspace of $E$, $E_{\alpha}$, called the
weight space of $E$ corresponding to the weight $\alpha$, 
$$
E_{\alpha}= \{ v\in E\colon \forall x\in L \hbox{ there exists }m_{v,x}\in\Bbb N\hbox{ such that:} (x-\alpha(x))^{m_{v,x}} (v) = 0\}.
$$
\indent As a consequence of our assumptions we have the following properties,
see [5, Chapter II, Theorems 7, 12]:\par
\noindent (i) the weights are linear functions on $L$, which vanish on $L^2$, 
i. e.,  they are characters of $L$,\par
\noindent (ii) $E$ has only a finite number of distinct weights; the weight 
spaces are submodules, and $E$ is the direct sum of them,\par
\noindent (iii) for each weight $\alpha$, the restriction of any $x \in L$ to
$E_{\alpha}$ has only one characteristic root, $\alpha (x)$, with certain
multiplicity, \par
\noindent (iv) there is a basis of $E$ such that for each weight $\alpha$ the
matrices of elements of $L$ in the weight space $E_{\alpha}$ can be taken
simultaneously in the form,
$$
x_{\alpha}=\begin{pmatrix}
                    \alpha (x)&*\\
                    0&\alpha (x)\\
                                       \end{pmatrix}.
$$         
\indent Finally, if $L$ is a complex nilpotent finite dimensional Lie algebra,
by [3, Chapter IV, Section 1], there is a Jordan-H"lder sequence of ideals $(L_i)_{(1\le i\le n)}$
such that,\par
\noindent (i) $\{0\}= L_{0}\subseteq L_i\subseteq L_n= L$,\par
\noindent (ii) $\dim L_i= i$,\par
\noindent (ii) there is a $k$, $0\le k\le n$, such that $L_k= L^2$,\par
\noindent (iv) If $i< j$, $[L_i ,L_j]\subseteq L_{i-1}$.\par
\indent As a consequence, if we consider a basis of $L$, 
$\{ x_j\}_{(1\le j\le n)}$, such that $\{ x_j\}_{(1\le j\le i)}$ is a basis
of $L_i$, we have that
$$
[x_j,x_i]=\sum_{h=1}^{i-1} c^h_{ij}x_h,
$$
where $i<j$.
\section{ The Main Result}
\indent In this section, we compute the joint spectra of a nilpotent Lie
algebra of linear transformations in a complex finite dimensional vector space. Besides, as we have said
in the introduction, and under our assumptions, we give an elementary proof of the 
compactness and the non emptiness of these joint spectra. We also consider an
example which show that, in the solvable non nilpotent case, our result fails.
Let us begin with our characterization of the joint spectra.\par
\begin{thm} Let $E$ be a complex finite dimensional vector space
and $L$ a complex nilpotent Lie subalgebra of ${\mathcal L}(E)^{op}$. Then, if
$\alpha $ is a weight of $L$ for $E$, $\alpha$ belongs to $Sp(L,E)$.\end{thm}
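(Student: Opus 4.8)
The plan is to show that the homology of the complex $(E\otimes\wedge L,d(\alpha))$ is nonzero already in degree zero, which by Definition is enough to place $\alpha$ in $Sp(L,E)$. Since $\alpha$ is a weight, property (i) guarantees that it is a character, so $\alpha(L^2)=0$ and $d(\alpha)$ is a genuine differential; hence the complex is well defined and the computation of its homology makes sense.

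First I would exploit the weight space decomposition. By property (ii), $E=\bigoplus_\beta E_\beta$ is a direct sum of $L$-submodules, the sum ranging over the finitely many weights $\beta$ of $L$ for $E$. Because each $E_\beta$ is a submodule, both the operators $x-\alpha(x)$ and the bracket action appearing in the formula for $d_p(\alpha)$ preserve $E_\beta$; inspecting that formula shows that the differential respects the splitting $E\otimes\wedge L=\bigoplus_\beta(E_\beta\otimes\wedge L)$. Consequently the complex decomposes as a direct sum of subcomplexes and
$$
H_p((E\otimes\wedge L,d(\alpha)))=\bigoplus_\beta H_p((E_\beta\otimes\wedge L,d(\alpha))).
$$
It therefore suffices to produce a single weight $\beta$ whose subcomplex has nonzero homology, and I would take $\beta=\alpha$.

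Next I would compute $H_0$ of the subcomplex attached to $E_\alpha$. Since $d_p(\alpha)=0$ for $p\le 0$, this degree-zero homology is the cokernel of $d_1(\alpha)$, and the defining formula gives $d_1(\alpha)(e\langle x\rangle)=e(x-\alpha(x))$, so that
$$
H_0((E_\alpha\otimes\wedge L,d(\alpha)))=E_\alpha\Big/\sum_{x\in L}(x-\alpha(x))(E_\alpha).
$$
The key point is to show that this image is a proper subspace of $E_\alpha$. Here I would invoke property (iv): there is a basis $v_1,\dots,v_m$ of $E_\alpha$ in which every $x\in L$ acts as an upper triangular matrix with $\alpha(x)$ along the diagonal. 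Each $x-\alpha(x)$ is then strictly upper triangular in this basis, so $(x-\alpha(x))(E_\alpha)\subseteq\mathrm{span}(v_1,\dots,v_{m-1})$ for every $x$. Summing over all $x\in L$ keeps the image inside the proper subspace $\mathrm{span}(v_1,\dots,v_{m-1})$, whence the cokernel, and thus $H_0$, is nonzero.

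The main obstacle is precisely this properness of the image. If one knew only that each $x-\alpha(x)$ is nilpotent on $E_\alpha$ (property (iii)), a sum of nilpotent operators could perfectly well be surjective, and the argument would break. What rescues it is the \emph{simultaneous} triangularization of all of $L$ on $E_\alpha$ afforded by the nilpotency of $L$, which confines every $(x-\alpha(x))(E_\alpha)$ to one common hyperplane. Combining this with the direct sum decomposition yields $H_0((E\otimes\wedge L,d(\alpha)))\neq 0$, and therefore $\alpha\in Sp(L,E)$.
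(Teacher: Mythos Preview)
Your argument is correct. The reduction to a single weight space via the direct-sum decomposition of the complex is exactly what the paper does, and your use of simultaneous upper-triangularization to show that $\sum_{x\in L}(x-\alpha(x))(E_\alpha)$ misses the last basis vector is sound.

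The route, however, is the mirror image of the paper's. The paper proves Theorem~3.1 by producing a nonzero class in \emph{top} degree: with the same triangular basis, the first vector $e_1$ satisfies $e_1(x_k-\alpha(x_k))=0$ for every $k$, so $e_1\langle x_1\wedge\cdots\wedge x_n\rangle$ is a nonzero cycle in degree $n$, and since $d_{n+1}=0$ this already gives $H_n\neq 0$. You instead work in degree $0$ and show that the last basis vector $v_m$ escapes the image of $d_1(\alpha)$. Both arguments hinge on the same simultaneous triangularization; yours exploits that strictly upper-triangular matrices have vanishing last row, the paper's that they have vanishing first column. In fact your computation is precisely what the paper carries out later, in Lemma~3.4(ii), to establish $\Sigma_0(L,E)=Sp(L,E)$. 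So nothing is lost, but be aware that the paper's proof of this particular theorem shows $\alpha\in\Sigma_n(L,E)$ rather than $\alpha\in\Sigma_0(L,E)$; the two together are what feed into Theorem~3.5.
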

\begin{proof}
\indent As $E$ is the direct sum of its weight spaces, which are $L$-modules 
of $E$, by the definition of the joint spectrum and elementary homological
algebra, we may assume  that there is only one weight of $L$ for $E$. Let us denote it by $\alpha$.\par
\indent We now observe that, as $\alpha$ is a character of $L$, we may consider 
the chain complex $(E\otimes\wedge L ,d(\alpha))$. Moreover, if $\{x_i\}_{(1\le i\le n)}$
is the basis of $L$ defined at the end of Section 2, it is easy to verify 
that,
$$
d_n(\alpha)e\langle x_1\wedge\ldots\wedge x_n\rangle= \sum_{k=1}^n (-1)^{k+1} e(x_k-\alpha (x_k))\langle x_1\wedge\ldots\wedge\hat{x_k}\wedge\ldots\wedge x_n\rangle.
$$
\indent However, as by the properties of the weights reviewed in Section 2,
there exists an ordered basis of $E$, $\{e_i\}_{(1\le i\le m)}$, where $m= dim (E)$,
such that, 
$$
x_k-\alpha(x_k)= \begin{pmatrix}
                            0&*\\
                            0&0\\
                                      \end{pmatrix} ,
$$
for all $k$, $1\le k\le n$. Thus, we have that,
$$        
e_1(x_k-\alpha (x_k))= 0.
$$
\noindent In particular,
$$
d_n(\alpha)e_1\langle x_1\wedge\ldots\wedge x_n\rangle= 0,
$$
which implies that $\alpha$ belongs to $Sp(L,E)$.
\end{proof}
\indent We now state one of our main results.\par
\begin{thm} Let $E$ be a complex finite dimensional vector space and 
$L$ a complex nilpotent Lie subalgbra of ${\mathcal L} (E)^{op}$. Then, 
$$
Sp(L,E)= \{ \alpha\in L^*\colon \alpha \hbox{ is a weight of $L$ for $E$}\}.
$$
\end{thm}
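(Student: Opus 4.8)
By the preceding theorem every weight of $L$ for $E$ lies in $Sp(L,E)$, so only the reverse inclusion remains: I must show that every $f\in Sp(L,E)$ is a weight. First I would reduce to a single weight space. Since $E=\bigoplus_\alpha E_\alpha$ is a direct sum of $L$-submodules (property (ii) of Section 2), the differential $d(f)$ preserves each summand $E_\alpha\otimes\wedge L$, so the complex $(E\otimes\wedge L,d(f))$ is the direct sum of the subcomplexes $(E_\alpha\otimes\wedge L,d(f))$, and homology commutes with finite direct sums. Hence $f\in Sp(L,E)$ if and only if $f\in Sp(L,E_\alpha)$ for some weight $\alpha$, and the theorem follows once I prove that $Sp(L,E_\alpha)=\{\alpha\}$ for every weight $\alpha$.

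Fix a weight $\alpha$. That $\alpha\in Sp(L,E_\alpha)$ is exactly the content of the preceding theorem applied to $E_\alpha$, so I am left to show that if $f$ is a character of $L$ with $f\neq\alpha$, then the complex $(E_\alpha\otimes\wedge L,d(f))$ is acyclic. The plan is to filter $E_\alpha$. By the simultaneous triangularization of property (iv) of Section 2 there is a complete flag $0=V_0\subset V_1\subset\cdots\subset V_m=E_\alpha$ of $L$-submodules with $\dim V_i=i$, on each one-dimensional quotient $V_i/V_{i-1}$ of which $L$ acts through the character $\alpha$. Each inclusion yields a short exact sequence of complexes
$$
0\to (V_{i-1}\otimes\wedge L,d(f))\to (V_i\otimes\wedge L,d(f))\to (V_i/V_{i-1}\otimes\wedge L,d(f))\to 0,
$$
and the associated long exact homology sequences let me induct on $\dim E_\alpha$: acyclicity of the whole complex reduces to acyclicity of each one-dimensional quotient complex.

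It remains to treat the one-dimensional case. On $W=V_i/V_{i-1}\cong\Bbb C$ the operator $x-f(x)$ acts as the scalar $g(x)$, where $g:=\alpha-f$; since both $\alpha$ and $f$ vanish on $L^2$, $g$ is a character, and $g\neq 0$ because $f\neq\alpha$. The complex $(W\otimes\wedge L,d(f))$ is then the Chevalley--Eilenberg complex of $L$ with coefficients in the one-dimensional module given by $g$. I would choose $x_0\in L$ with $g(x_0)\neq 0$ and define the degree-raising map $h(\omega)=g(x_0)^{-1}\,x_0\wedge\omega$. A direct computation, in which the first sum of $d(f)$ produces the identity and the bracket sum produces a correction, gives $d(f)\,h+h\,d(f)=\mathrm{id}+N$, where $N$ is $g(x_0)^{-1}$ times the derivation of $\wedge L$ induced by $\operatorname{ad}x_0$. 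Because $L$ is nilpotent, $\operatorname{ad}x_0$ is nilpotent, hence so is $N$; moreover $N$ commutes with $d(f)$. Therefore $\mathrm{id}+N$ is an invertible chain map, and replacing $h$ by $(\mathrm{id}+N)^{-1}h$ exhibits a contracting homotopy, so the one-dimensional complex is acyclic. This forces $f\notin Sp(L,E_\alpha)$, completing the argument.

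The main obstacle is the homotopy identity $d(f)\,h+h\,d(f)=\mathrm{id}+N$ in the last step: the bracket terms of $d(f)$ must be tracked carefully to see that the deviation from the identity is exactly the nilpotent operator $N$. One could instead try a single contracting homotopy on all of $E_\alpha$, built from the invertible operator $x_0-f(x_0)=(x_0-\alpha(x_0))+g(x_0)\,\mathrm{id}$, but the non-commutativity of $L$ forces the very same ``identity plus nilpotent'' analysis. A secondary point to verify is that the flag can be chosen through $L$-submodules, which is guaranteed by the weight theory recalled in Section 2.
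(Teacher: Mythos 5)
Your proof is correct, but after the shared first reduction it takes a genuinely different route from the paper's. Both arguments split $E$ into its weight spaces and reduce to showing that a character $f$ with $f\neq\alpha$ yields an acyclic complex over $E_\alpha$. The paper then works on the whole weight space at once: it fixes a Jordan--H\"older basis of $L$, chooses $x_j\notin L^2$ with $\alpha(x_j)\neq f(x_j)$, splits $L=L'\oplus\langle x_j\rangle$ with $L'$ a codimension-one ideal, and decomposes $d_p(f)$ accordingly; its key lemma is that the resulting operator $L_p$ on $E\otimes\wedge^p L'$ is, in a suitable basis, upper triangular with constant diagonal $\alpha(x_j)-f(x_j)\neq 0$, hence invertible, which produces a contracting homotopy as in [2]. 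You instead filter $E_\alpha$ by a flag of $L$-submodules and use long exact sequences to reduce to one-dimensional coefficients, where the complex becomes the Chevalley--Eilenberg complex of the nonzero character $g=\alpha-f$, and you conclude via the Cartan-type identity $d(f)\,h+h\,d(f)=\mathrm{id}+N$, with $N$ nilpotent (because $\operatorname{ad}x_0$ is nilpotent, $L$ being nilpotent) and commuting with $d(f)$. The two proofs share the same germ---a homotopy built from exterior multiplication by an element of $L$ on which the two characters differ---but your reduction machinery is different: the homological bookkeeping (direct sums, flags, long exact sequences) absorbs all of the module structure, and the nilpotency of $L$ enters only in a rank-one computation, which is essentially the classical vanishing of the homology of a nilpotent Lie algebra with coefficients in a nontrivial one-dimensional module; this also isolates exactly why the solvable non-nilpotent example at the end of Section 3 escapes the theorem. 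The paper's route avoids long exact sequences but pays with the more delicate triangularity claim for $L_p$, which interweaves the weight flag in $E$ with the Jordan--H\"older filtration of $L'$. The two points you flag for verification---the sign-careful homotopy identity, and the existence of a flag of $L$-submodules, which indeed follows from the simultaneous triangularization in property (iv) of Section 2---are both sound, so there is no gap.
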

\begin{proof}
\indent By Theorem 3.1, it is enough to see that,
$$
Sp(L,E)\subseteq\{\alpha\in L^*\colon \alpha \hbox{ is a weight of $L$ for $E$}\}.
$$
As in Theorem 3.1 we may suppose that there is only one weight  of $L$ 
for $E$, which we still denote by $\alpha$.\par
\indent In order to conclude the proof, we consider $\beta \in L^*$, $\beta (L^2)= 0$,
such that $\beta$ is not a weight of $L$ for $E$, i. e., in our case
$\alpha\ne\beta$, and by refining an argument of [2], we prove that $\beta$ 
does not belong to $Sp(L,E)$.\par
\indent Let us consider the chain complex associated to $\beta$, 
$(E\otimes\wedge L, d(\beta))$,  the sequence of ideals $(L_i)_{(1\le i\le n)}$,
 and the basis $\{x_i\}_{(1\le i\le n)}$ of $L$ reviewed in Section 2. 
As $\alpha\ne \beta$, there is a $j$, $k+1\le j\le n$, where $ k= \dim L^2$ and $n= \dim L$,
such that $\alpha (x_j)\ne\beta (x_j)$. Now, if $L'$ is the ideal of codimension 1 
of $L$ generated by $\{ x_i\}_{(1\le i\le n, i\ne j)}$, we have that $L'\oplus\langle x_j\rangle= L$.
Moreover, we may deconpose  $E\otimes\wedge^p L$ in the following way,
$$
E\otimes\wedge^p L= (E\otimes\wedge^p L')\oplus ((E\otimes\wedge^{p-1} L')\wedge\langle x_j\rangle).
$$
If $\tilde\beta$ denotes the restriction of $\beta$  to $L'$, then we may 
consider the complex $(E\otimes\wedge L',d(\tilde\beta))$ and, as $L'$
is an ideal of codimension 1 of $L$, we may decompose $d_p(\beta)$ as follows,
$$
d_p(\beta):E\otimes\wedge^p L'\to E\otimes\wedge^{p-1} L',
$$
$$
d_p(\beta)= d_p(\tilde\beta),
$$
$$    
d_p(\beta): E\otimes\wedge^{p-1} L'\wedge\langle x_j\rangle\to (E\otimes\wedge^{p-1} L')\oplus (E\otimes\wedge^{p-2} L'\wedge\langle x_j\rangle),
$$
$$
d_p(\beta)(a\wedge\langle x_j \rangle)= (-1)^p L_{p-1}(a) +(d_{p-1}(\tilde\beta)(a))\wedge\langle x_j\rangle,
$$
where $a\in E\otimes\wedge^{p-1} L'$ and $L_{p-1}$ is the linear endomorphism 
defined on $E\otimes\wedge^{p-1} L'$ by

\begin{align*}
L_{p-1} e\langle y_1\wedge\ldots\wedge y_{p-1}\rangle=& e(x_j-\beta (x_j))\langle y_1\wedge\ldots\wedge y_{p-1}\rangle\\
                                                      & +\sum_{1\le l\le p-1}(-1)^l e\langle[x_j,y_l]\wedge y_1\wedge\ldots\wedge\hat{ y_l}\wedge\ldots\wedge y_{p-1}\rangle,\\\end{align*}

\noindent where $\hat{ }$ means deletion and $\{y_h\}_{(1\le h\le p-1)}$ belong to $L'$. \par
\indent For $s\in [\![1,j-1]\!]$, $[x_j,x_s]= \sum_{h=1}^{s-1} c^h_{s j} x_h$, and for $s\in [\![j+1, n]\!]$,
$[x_j, x_s]= \sum_{h=1}^{j-1}(-c^h_{j s})x_h$. Thus, by the properties of weights considered in Section 2, 
it is easy to see that for each $ p$ there is a basis of 
$E\otimes\wedge^p L'$ such that in this basis $L_p$ is an upper triangular 
matrix with diagonal entries $\alpha (x_j)-\beta (x_j)$. However, as
$\alpha (x_j)\ne\beta (x_j)$, $L_p$ is an invertible map for each $p$. Thus,
as in [2], we may construct an homotopy operator for the complex 
$(E\otimes\wedge L, d(\beta))$. However, this fact implies that 
$H_* ((E\otimes\wedge L,d(\beta))$= 0, or equivalently, $\beta$ does not belong to
$Sp(L,E)$
\end{proof}
\indent As a consequence of Theorem 3.2, we have, under our assumptions, another
proof of the compactness  and  non emptiness of the spectrum.\par
\begin{thm} Let $E$ be a complex finite dimensional vector space and
 $L$ a complex nilpotent Lie subalgebra of ${\mathcal L}(E)^{op}$. Then, $Sp(L,E)$
is a compact non empty subset of $L^*$.\end{thm}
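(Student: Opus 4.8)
The plan is to read off both conclusions directly from Theorem 3.2, which identifies $Sp(L,E)$ with the set of weights of $L$ for $E$; this makes the homological definition and the Banach-space arguments of [1] and [2] unnecessary for the finite dimensional case. First I would invoke property (ii) of the weight theory recalled in Section 2: $E$ possesses only a finite number of distinct weights, and it decomposes as the direct sum of the associated weight spaces. Combining this with Theorem 3.2, I conclude that $Sp(L,E)$ equals a finite subset of $L^*$.

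Compactness would then follow with essentially no work. Since $L$ is finite dimensional, $L^*$ is a finite dimensional complex vector space and carries its canonical (Euclidean) topology, in which every finite set is closed and bounded, hence compact. So the finiteness established above immediately yields that $Sp(L,E)$ is compact.

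For non-emptiness I would argue that $E$ admits at least one weight whenever $E \neq \{0\}$. Indeed, by property (ii) of Section 2 we have the decomposition $E = \bigoplus_{\alpha} E_{\alpha}$ into weight spaces; were the family of weights empty, this direct sum would be empty and force $E = \{0\}$, contrary to hypothesis. Equivalently, one may appeal to the fact that a complex nilpotent (indeed solvable) Lie algebra of linear transformations on a nonzero complex finite dimensional space always possesses a common generalized eigenvector, which furnishes a weight. By Theorem 3.2 this set of weights coincides with $Sp(L,E)$, so the latter is non-empty.

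I do not expect a genuine obstacle here, as the substance was already carried by Theorem 3.2. The only point meriting care is the non-emptiness step: compactness of a finite set is automatic, so the argument must make explicit that the structure theory of Section 2 really does guarantee the existence of at least one weight, which it does precisely because $E$ is nonzero.
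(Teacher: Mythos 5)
Your proposal is correct and is essentially the paper's own argument: the paper states this theorem as an immediate consequence of Theorem 3.2, the point being exactly what you spell out --- $Sp(L,E)$ equals the set of weights, which by the structure theory recalled in Section 2 is a finite (hence compact) and, since $E\neq\{0\}$ decomposes as a direct sum of weight spaces, non-empty subset of $L^*$. Your elaboration of the non-emptiness step is the right detail to make explicit.
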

\indent We now consider the joint spectra $\sigma_{\delta k}(L,E)$ and 
$\sigma_{\pi k}(L,E)$, $ 0\le k\le n$. We first state a lemma which we need for our
characterization of these joint spectra.\par
\begin{lem} Let $E$ be a complex finite dimensional vector space and $L$
a complex nilpotent Lie subalgebra of ${\mathcal L} (E)^{op}$ such that $\dim L= n$. 
Then,\par
\noindent \rm{(i)} $\Sigma_n(L,E)= Sp(L,E)$,\par
 \noindent \rm{(ii)} $\Sigma_0(L,E)= Sp(L,E)$.\end{lem}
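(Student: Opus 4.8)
The plan is to establish each identity through its two inclusions. First I would note that $\Sigma_n(L,E)\subseteq Sp(L,E)$ and $\Sigma_0(L,E)\subseteq Sp(L,E)$ hold trivially, since by definition $H_n((E\otimes\wedge L,d(f)))\neq 0$ or $H_0((E\otimes\wedge L,d(f)))\neq 0$ already forces $H_*((E\otimes\wedge L,d(f)))\neq 0$. Hence the real content lies in the reverse inclusions $Sp(L,E)\subseteq\Sigma_n(L,E)$ and $Sp(L,E)\subseteq\Sigma_0(L,E)$. By Theorem 3.2 the set $Sp(L,E)$ equals the set of weights of $L$ for $E$, so it is enough to prove that every weight $\alpha$ makes both the top homology $H_n$ and the bottom homology $H_0$ of the complex $(E\otimes\wedge L,d(\alpha))$ nonzero.

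Exactly as in Theorems 3.1 and 3.2, I would first reduce to the case of a single weight. Since $E=\bigoplus_\beta E_\beta$ is the direct sum of its weight spaces, each of which is an $L$-submodule, the complex $(E\otimes\wedge L,d(\alpha))$ splits as the direct sum of the complexes $(E_\beta\otimes\wedge L,d(\alpha))$; as homology commutes with finite direct sums, $H_p((E_\alpha\otimes\wedge L,d(\alpha)))$ is a direct summand of $H_p((E\otimes\wedge L,d(\alpha)))$. Thus it suffices to produce nonzero classes assuming $E=E_\alpha$, and I fix the ordered basis $\{e_i\}_{1\le i\le m}$, $m=\dim E$, of property (iv) of Section 2, in which every $x_k-\alpha(x_k)$ is strictly upper triangular.

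For part (i), since $d_{n+1}(\alpha)=0$ one has $H_n=\ker d_n(\alpha)$. The computation in the proof of Theorem 3.1 shows $d_n(\alpha)\,e_1\langle x_1\wedge\ldots\wedge x_n\rangle=0$, and this element is nonzero because $e_1\neq 0$ and $\wedge^n L$ is one dimensional; hence $H_n\neq 0$ and $\alpha\in\Sigma_n(L,E)$. For part (ii), since $d_0(\alpha)=0$ one has $H_0=E/\operatorname{im}d_1(\alpha)$, where $\operatorname{im}d_1(\alpha)=\sum_{k=1}^n (x_k-\alpha(x_k))E$. As each $x_k-\alpha(x_k)$ is strictly upper triangular, $(x_k-\alpha(x_k))E\subseteq\langle e_1,\ldots,e_{m-1}\rangle$, so $\operatorname{im}d_1(\alpha)$ is a proper subspace of $E$ and the class of $e_m$ is a nonzero element of $H_0$; hence $\alpha\in\Sigma_0(L,E)$.

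The argument is essentially a transcription of the mechanism of Theorem 3.1, so I expect no serious obstacle. The one point demanding care is the dual role of the two ends of the basis: the vector $e_1$, which is annihilated by every $x_k-\alpha(x_k)$, furnishes a nonzero top cycle, whereas $e_m$, which lies in no image $(x_k-\alpha(x_k))E$, survives in the bottom homology. Checking that the single triangular basis of property (iv) simultaneously yields both facts, together with the routine verification that homology respects the weight-space decomposition, is the only place where attention is needed.
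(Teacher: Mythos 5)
Your proof is correct and follows essentially the same route as the paper's: the trivial inclusions $\Sigma_n(L,E)\subseteq Sp(L,E)$ and $\Sigma_0(L,E)\subseteq Sp(L,E)$, then Theorem 3.2 together with the reduction to $E=E_\alpha$, with the cycle $e_1\langle x_1\wedge\ldots\wedge x_n\rangle$ from the proof of Theorem 3.1 giving $H_n\neq 0$ and the vector $e_m$ escaping the range of $d_1(\alpha)$ giving $H_0\neq 0$. No gaps.
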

\begin{proof}
\indent If one looks at Theorem 3.1, a consequence of its proof is the fact that,
 if $\alpha$ is a weight of $L$ for $E$, then $\alpha\in \sum_n(L,E)$. However, by
Theorem 3.2 we have:
$$
Sp(L,E) \subseteq\Sigma_n(L,E)\subseteq Sp(L,E).
$$
\indent In order to prove (ii), we consider the chain complex involved in the 
definition of $Sp(L,E)$, $(E\otimes\wedge L,d(f))$, where $f$ is a character of $L$, and we study $d_1(f)$. We shall see that if $\alpha$ is a weight
of $L$ for $E$, $d_1(\alpha)$ is not onto, equivalently, 
$ 0\ne E/R(d_1(\alpha))= H_0(E\otimes\wedge L,d(\alpha))$, i. e., $\alpha$
belongs to $\sum_0(L,E)$. Then, by Theorem 3.2, as in the proof of (i), 
we conclude (ii).\par
\indent Let us prove the previous claim. We consider $\alpha$ a weight 
of $L$ for $E$. Besides, by elementary homological arguments  and the properties 
 of weights, we may suppose that $E= E_{\alpha}$. If this is the case, let us consider 
the chain complex $(E\otimes\wedge L,d(\alpha))$ and the map $d_1(\alpha)$,
$$
d_1(\alpha):E\otimes\wedge^1 L\to E,
$$
$$
d_1(\alpha)(e\langle x\rangle)= e(x-\alpha(x)).
$$
\indent However, by the revision of Section 2, there is a basis of $E$, $\{e_i\}_{(1\le i\le m)}$, where $m= \dim (E)$,
such that the matrix of any $x-\alpha (x)$, in this basis, has the form,
$$
x-\alpha (x)=\begin{pmatrix}
                0&*\\      
                0&0\\
                              \end{pmatrix}.
$$
\indent Then, $e_m$ does not belong to $R(d_0(\alpha))$, which implies our claim.\end{proof}
\indent We now may study the relation among $Sp(L,E)$, $\sigma_{\delta k}(L,E)$
and $\sigma_{\pi k}(L,E)$, $0\le k\le n$, for nilpotent Lie algebras of
linear transformations in complex finite dimensional vector spaces. The following
theorem states the relation among then.\par
\begin{thm} Let $E$ be a complex finite dimensional vector space and 
$L$ a complex nilpotent Lie algebra of ${\mathcal L}(E)^{op}$. Then, we have the
following identity,
$$
Sp(L,E)= \sigma_{\delta k}(L,E)= \sigma_{\pi k}(L,E),
$$
where $0\le k\le n$, and $dim (L)= n$.\end{thm}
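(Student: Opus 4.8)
The plan is to derive this statement entirely from Lemma 3.4 together with the definitions of the two families of spectra, so that essentially no new computation is required. First I would record the two inclusions that come straight from the definitions. Since $Sp(L,E)=\bigcup_{0\le p\le n}\Sigma_p(L,E)$, while $\sigma_{\delta ,k}(L,E)=\bigcup_{0\le p\le k}\Sigma_p(L,E)$ and $\sigma_{\pi ,k}(L,E)=\bigcup_{k\le p\le n}\Sigma_p(L,E)$ are unions over subranges of $\{0,\ldots ,n\}$, both $\sigma_{\delta ,k}(L,E)$ and $\sigma_{\pi ,k}(L,E)$ are automatically contained in $Sp(L,E)$ for every $k$ with $0\le k\le n$.

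For the reverse inclusions I would isolate, in each family, the one index that is present no matter what $k$ is. In the union defining $\sigma_{\delta ,k}(L,E)$ the index $p=0$ always occurs, because $0\le k$; hence $\Sigma_0(L,E)\subseteq\sigma_{\delta ,k}(L,E)$. Dually, in the union defining $\sigma_{\pi ,k}(L,E)$ the top index $p=n$ always occurs, because $k\le n$; hence $\Sigma_n(L,E)\subseteq\sigma_{\pi ,k}(L,E)$. This is really the only observation the argument needs.

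Now I would invoke Lemma 3.4, which gives $\Sigma_0(L,E)=Sp(L,E)$ and $\Sigma_n(L,E)=Sp(L,E)$. Combining these with the previous step yields $Sp(L,E)=\Sigma_0(L,E)\subseteq\sigma_{\delta ,k}(L,E)$ and $Sp(L,E)=\Sigma_n(L,E)\subseteq\sigma_{\pi ,k}(L,E)$. Together with the inclusions of the first paragraph this forces $\sigma_{\delta ,k}(L,E)=\sigma_{\pi ,k}(L,E)=Sp(L,E)$ for all $k$ in the stated range, which is the assertion of the theorem.

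I do not expect any genuine obstacle: the substance of the result was already carried out in Lemma 3.4, whose proof in turn rests on the weight-space analysis of Theorems 3.1 and 3.2 (the invertibility of the triangular endomorphisms $L_p$ and the explicit non-surjectivity of $d_1(\alpha)$). The present statement is the bookkeeping corollary that packages those two extreme-degree computations into the whole family of S\l odkowski spectra. The single point requiring care is that the distinguished index --- namely $0$ for the $\delta$ family and $n$ for the $\pi$ family --- lies in the admissible range for \emph{every} $k$, which is exactly guaranteed by the constraints $0\le k$ and $k\le n$.
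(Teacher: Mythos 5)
Your proposal is correct and follows exactly the paper's own argument: both establish $Sp(L,E)=\Sigma_0(L,E)\subseteq\sigma_{\delta ,k}(L,E)\subseteq Sp(L,E)$ and $Sp(L,E)=\Sigma_n(L,E)\subseteq\sigma_{\pi ,k}(L,E)\subseteq Sp(L,E)$ by combining Lemma 3.4 with Definitions 2.1 and 2.2. The only difference is that you spell out the bookkeeping (the index $0$, respectively $n$, lies in every admissible range of $k$) which the paper leaves implicit.
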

\begin{proof}
\indent The proof is a consequence of the following observation. By Lemma 3.4 and Definitions 2.1 and 2.2, we have that,
$$
Sp(L,E)= \Sigma_0(L,E)\subseteq\sigma_{\delta k}(L,E)\subseteq Sp(L,E) ,
$$
$$
Sp(L,E)= \Sigma_n(L,E)\subseteq\sigma_{\pi k}(L,E)\subseteq Sp(L,E),
$$
where $ 0\le k\le n$, and $\dim (L)= n$.  
\end{proof}
\indent As a consequence of Theorems 3.3 and 3.5, we have the following result.
\begin{thm} Let $E$ be a complex finite dimensional vector space and $L$
a complex nilpotent Lie subalgebra of ${\mathcal L}(E)^{op}$. Then 
$\sigma_{\delta k}(L,E)$ and $\sigma_{\pi k}(L,E)$ are compact non empty subsets
of $L^*$, where $0\le k\le n$, and $ n= \dim (L).$\end{thm}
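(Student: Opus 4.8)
The plan is to obtain the statement as an immediate corollary of the two results just established, so the proof should be short and require no genuinely new argument. First I would invoke Theorem 3.5, which gives, for every $k$ with $0\le k\le n$, the set-theoretic identities
$$
Sp(L,E)= \sigma_{\delta k}(L,E)= \sigma_{\pi k}(L,E).
$$
This is the key reduction: it collapses the entire S\l odkowski scale onto the single set $Sp(L,E)$, so that any topological property of $Sp(L,E)$ transfers automatically to each $\sigma_{\delta k}(L,E)$ and each $\sigma_{\pi k}(L,E)$.

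Next I would appeal to Theorem 3.3, which asserts that $Sp(L,E)$ is a compact non empty subset of $L^*$. Combining this with the identities above, I conclude that each $\sigma_{\delta k}(L,E)$ and each $\sigma_{\pi k}(L,E)$ is compact, non empty, and contained in $L^*$, for every $k$ in the prescribed range. Since the equalities of Theorem 3.5 hold uniformly in $k$, the conclusion follows for all $k$ simultaneously and the argument is complete.

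I do not expect any obstacle at this stage: all of the difficulty has already been discharged in the earlier results. Indeed, Theorem 3.2 identifies $Sp(L,E)$ with the set of weights of $L$ for $E$, which is finite by property (ii) of Section 2 (hence compact) and non empty because a nilpotent Lie algebra of linear transformations on a nonzero finite dimensional vector space always admits at least one weight, $E$ being the direct sum of its weight spaces; Theorem 3.5 then propagates these properties across the whole family. The present theorem is thus purely a bookkeeping step recording the consequence for the S\l odkowski joint spectra.
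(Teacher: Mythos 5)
Your proposal is correct and follows exactly the paper's own route: the paper states this theorem as an immediate consequence of Theorems 3.3 and 3.5, precisely the combination you use. Your extra remarks tracing compactness and non-emptiness back to Theorem 3.2 and the weight-space decomposition are accurate but not needed beyond what Theorems 3.3 and 3.5 already provide.
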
 
\indent We now give an example in order to show that, for the solvable non nilpotent
case, our characterization of the joint spectra fails.\par
\indent Let us consider the solvable Lie algebra on two generators, $L=
\langle y\rangle\oplus\langle x\rangle$, with the bracket $[x,y]^{op}= y$, i. e., $[x,y]= -y$. $L$ may
be viewed as a subalgebra of ${\mathcal L}(\Bbb C^2)^{op}$ as follows,
$$
y=\begin{pmatrix}                                                                                   
            1&1 \\                                                                                     
           -1&-1\\
                  \end{pmatrix},
\hskip2cm
x=\begin{pmatrix}
              0&\frac{1}{2}\\
             \frac{1}{2}&0\\
                               \end{pmatrix}.
$$                
\indent An easy calculation shows that the weights of $L$ for $\Bbb C^2$
are, in terms of the dual basis of $y$ and $x$, $(0,\frac{1}{2})$ and
$(0,\frac{-1}{2})$. However, if we still consider the previous basis,
$Sp(L,\Bbb C^2)= \{(0,\frac{1}{2}), (0,\frac{-3}{2})\}$. 

\section{Some Consequences}
\indent In this section we shall see some consequences of the main theorems. We 
begin with a result which relates the weights of differents ideals.\par
\begin{thm} Let $E$ be a complex finite dimensional vector space, 
$L$ a complex nilpotent Lie subalgebra of ${\mathcal L} (E)^{op}$ and $I$ an ideal of 
$L$. Then, if $\alpha$ is a weight of $L$ for $E$, its restriction to $I$,
$\alpha\mid I$, is a weight of $I$ for $E$. Reciprocally, if $\beta$ is a
 weight of $I$ for $E$, there exists $\alpha$, a weight of $L$ for $E$, such 
that $\beta= \alpha\mid I$. In particular, if $I$ is contained in $L^2$, 
$I$ has only one weight for $E$: $\beta= 0$.\end{thm}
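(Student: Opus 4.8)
The plan is to reduce the statement to the projection property for ideals together with the identification of the joint spectrum with the set of weights furnished by Theorem 3.2. First I would note that $I$, being an ideal of the nilpotent Lie algebra $L$, is itself a complex nilpotent Lie subalgebra of ${\mathcal L}(E)^{op}$, so that Theorem 3.2 applies to $I$ as well as to $L$. Concretely, this means that the weights of $L$ for $E$ are exactly the elements of $Sp(L,E)$, and the weights of $I$ for $E$ are exactly the elements of $Sp(I,E)$.

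Next I would invoke the projection property recalled in Section 2: letting $\pi\colon L^*\to I^*$ denote the restriction map, one has $Sp(I,E)=\pi(Sp(L,E))$. Combining this with the two applications of Theorem 3.2 gives the identity
$$
\{\,\beta\in I^*\colon \beta \hbox{ is a weight of } I \hbox{ for } E\,\}=\pi\bigl(\{\,\alpha\in L^*\colon \alpha \hbox{ is a weight of } L \hbox{ for } E\,\}\bigr),
$$
that is, the weights of $I$ are precisely the restrictions to $I$ of the weights of $L$. This single identity yields both assertions of the theorem at once: the inclusion asserting that the restriction of every weight of $L$ is a weight of $I$ gives the first statement, while the reverse inclusion, that every weight of $I$ is the restriction of some weight of $L$, gives the converse.

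Finally, for the special case $I\subseteq L^2$, I would use that every weight $\alpha$ of $L$ is a character of $L$, and hence vanishes on $L^2$ and a fortiori on $I$; thus $\pi(\alpha)=\alpha\mid I=0$ for every weight $\alpha$ of $L$. By the identity above every weight of $I$ is of this form, so $0$ is the only possible weight, and since $Sp(I,E)$ is non empty by Theorem 3.3 applied to $I$, the algebra $I$ does admit a weight, which must therefore be $0$.

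I do not expect a genuine obstacle here, since the content of the theorem is already encoded in Theorem 3.2 and in the projection property, and the argument merely transports set-theoretic relations through the restriction map $\pi$. The only point deserving a moment's care is the verification that $I$ indeed satisfies the hypotheses of Theorem 3.2 and of the projection property, namely that an ideal of a nilpotent Lie algebra of linear transformations is again a nilpotent Lie algebra of linear transformations.
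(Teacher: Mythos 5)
Your proof is correct and takes essentially the same route as the paper: the paper's own proof consists precisely of citing Theorem 3.2 together with the projection property for ideals (Theorem 3 of [2]), which is exactly the combination you use. Your write-up simply fills in the details the paper leaves implicit, including the observation that an ideal of a nilpotent algebra is nilpotent and the treatment of the case $I\subseteq L^2$ via characters and non emptiness.
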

\begin{proof}
\indent The proof is a consequence of Theorem 3.2  and Theorem 3
of [2], the projection property for ideals of the spectrum.
\end{proof}
\indent From now to the end of this section, we deal with two basic module
operations of the Lie algebra $L$, the contragradient module and the tensor product, see [5, Chapter III]. We shall
compute several new joint spectra by means of these module operations. Moreover, these results give a new interpretation
 of these module operations in terms of the joint spectra. More precisely, we consider
Propositions 3 and 4 of [5, Chapter III], and we state then as properties of the joint spectra. Let us begin with 
the contragradient module.\par
\indent If $L$ and $E$ are as usual, we consider the space of linear 
functionals on $E$, $E^*$. If $x$ belongs to $L$ and  $x^*$ denotes its 
adjoint, then it is easy to see that the set $L'$ 
$$
L'= \{x^*\colon x\in L\},
$$
with the natural bracket, defines a complex nilpotent  Lie subalgebra of
${\mathcal L}(E^*)^{op} $, which is isomorphic to $L$ with the opposite bracket.
We now compute the joint spectra of $L'$ on $E^*$.\par
\begin{thm} Let $E$ be a complex finite dimensional vector space
 and $L$ a complex nilpotent Lie subalgebra of ${\mathcal L}(E)^{op}$. Let $E^*$
be the dual vector space of $E$ and $L'$  the nilpotent Lie subalgbra of 
${\mathcal L}(E^*)^{op}$ defined above. Then,
$$
Sp(L',E^*)= Sp(L,E), 
$$
$$
\sigma_{\delta k}(L',E^*)= \sigma_{\delta k}(L,E),
$$
$$
\sigma_{\pi k} (L',E^*)= \sigma_{\pi k }(L,E),
$$
where $0\le k\le n$, and $n=\dim (L)= \dim (L')$.\end{thm}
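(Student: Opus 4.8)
The plan is to reduce the statement to the corresponding statement for the plain spectrum $Sp$ and then to a purely weight-theoretic fact, exploiting the two main theorems already proved. First I would observe that Theorem 3.5 applies verbatim to the nilpotent Lie algebra $L'$ acting on $E^*$, so that $Sp(L',E^*)=\sigma_{\delta k}(L',E^*)=\sigma_{\pi k}(L',E^*)$ for every $k$, and likewise for the pair $(L,E)$. Consequently all three displayed identities follow at once from the single identity $Sp(L',E^*)=Sp(L,E)$, and it is enough to establish this one.

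Next I would invoke Theorem 3.2 for both pairs: $Sp(L,E)$ is exactly the set of weights of $L$ for $E$, while $Sp(L',E^*)$ is the set of weights of $L'$ for $E^*$. Since these two sets live a priori in different dual spaces, $L^*$ and $(L')^*$, I must first fix the identification under which the asserted equality is to be read. The linear bijection $x\mapsto x^*$ of $L$ onto $L'$ (which, as remarked before the statement, is a Lie isomorphism onto $L'$ equipped with the opposite bracket) induces a linear isomorphism of $(L')^*$ onto $L^*$, sending a functional $g$ to the functional $x\mapsto g(x^*)$; it is through this identification that $Sp(L',E^*)$ and $Sp(L,E)$ are to be compared. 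With this in place the task becomes the weight correspondence: $g$ is a weight of $L'$ for $E^*$ if and only if $x\mapsto g(x^*)$ is a weight of $L$ for $E$.

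To prove this correspondence I would use property (iv) of the weight theory recalled in Section 2. Assuming, as we may after decomposing $E$ into its weight spaces, that $L$ has a single weight $\alpha$ on $E=E_{\alpha}$, there is a basis of $E$ in which every $x\in L$ is upper triangular with constant diagonal $\alpha(x)$. Passing to the dual basis of $E^*$, each adjoint $x^*$ is represented by the transpose matrix, hence is lower triangular with the same diagonal $\alpha(x)$; reversing the order of the dual basis makes every $x^*$ upper triangular with diagonal $\alpha(x)$. This exhibits the functional $g_{\alpha}\colon x^*\mapsto\alpha(x)$ as a weight of $L'$ for $E^*$, with the dual-basis vectors serving as the relevant generalized eigenvectors, and $g_{\alpha}$ corresponds to $\alpha$ under the identification above. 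Because $E$ is finite dimensional we have $E^{**}=E$ and $(L')'=L$, so applying the same argument to $(L',E^*)$ yields the reverse inclusion; hence the weight sets match bijectively and $Sp(L',E^*)=Sp(L,E)$. Alternatively one may simply quote Proposition 3 of [5, Chapter III] for this weight statement.

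The one point that requires genuine care, and which I expect to be the main obstacle, is the bookkeeping of the identification $(L')^*\cong L^*$ together with the sign conventions carried by the opposite products in $\mathcal{L}(E)^{op}$ and $\mathcal{L}(E^*)^{op}$. One must check that the eigenvalues of the adjoint $x^*$ on $E^*$ coincide with those of $x$ on $E$, namely $\alpha(x)$, with no spurious change of sign, so that the weight of $L'$ attached to $\alpha$ is $x^*\mapsto\alpha(x)$ and not $x^*\mapsto-\alpha(x)$. Once the transpose computation is carried out in the dual basis this is immediate, since a matrix and its transpose share the same characteristic polynomial; the remaining verifications are routine.
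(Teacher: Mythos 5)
Your proof is correct and follows essentially the same route as the paper: Theorems 3.5 and 3.2 reduce all three identities to the equality of weight sets under the identification $(L')^*\cong L^*$, and the weight correspondence for adjoints is exactly what the paper obtains by citing a ``slight modification of the proof of [5, Chapter III, Proposition 3]''. Your explicit transpose-plus-basis-reversal computation, together with the observation that no sign change occurs because $L'$ consists of the adjoints $x^*$ rather than $-x^*$ (as in the genuine contragredient module), is precisely that modification spelled out.
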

\begin{proof}
\indent As we may consider the inclusion as a representation of $L$ in 
${\mathcal L}(E)^{op}$, the proof is a consequence of a slight modification of
 the proof of [5, Chapter III, Proposition 3] and Theorems 3.2 and 3.5.
\end{proof}
\indent Finally, we study the joint spectrum of a tensor product. If $E_1$ and
$E_2$ are two complex finite dimensional vector spaces and $L_1$ and $L_2$ are
two complex nilpotent Lie subalgebras of ${\mathcal L} (E_1)^{op}$ and 
${\mathcal L}(E_2)^{op}$, respectively, then we consider the tensor product of $E_1$
and $E_2$, $E_1\otimes E_2$, and the nilpotent Lie subalgebras of 
${\mathcal L}(E_1\otimes E_2)^{op}$, $\tilde L_1$ and $\tilde L_2$, defined by,
$$
\tilde L_1= \{x\otimes 1\colon x\in L_1\},  
\hskip2cm
\tilde L_2= \{1\otimes y\colon y\in L_2\},
$$
where 1 denotes the identity of the corresponding space. As in 
${\mathcal L}(E_1\otimes E_2)$,  $[\tilde L_1 ,\tilde L_2]= 0$, the set,
$$
\{ x\otimes 1+ 1\otimes y\colon x\in L_1, y\in L_2\},
$$
may be viewed  as the direct product of the Lie algebras $L_1$ and $L_2$,
$L_1\times L_2$, with the natural bracket of ${\mathcal L}(E_1\otimes E_2)^{op}$.
Besides, $\tilde L_1$ and $\tilde L_2$ are two ideals of the complex 
nilpotent Lie algebra $L_1\times L_2$. Our objective is to compute the joint 
spectra of $L_1\times L_2$ on $E_1\otimes E_2$. The following theorem 
describes the situation.\par
\begin{thm} Let $E_1$ and $E_2$ be two complex finite dimensional 
vector spaces and $L_1$ and $L_2$ two complex nilpotent  subalgebras of
${\mathcal L}(E_1)^{op}$ and ${\mathcal L}(E_2)^{op}$, respectively. Then,
$$
Sp(L_1\times L_2, E_1\otimes E_2)= Sp(L_1, E_1)\times Sp(L_2, E_2),
$$
$$
\sigma_{\delta k}(L_1\times L_2, E_1\otimes E_2)= Sp(L_1, E_1)\times Sp(L_2, E_2),
$$
$$
\sigma_{\pi k}(L_1\times L_2, E_1\otimes E_2)= Sp(L_1, E_1)\times Sp(L_2, E_2),
$$
where $ 0\le k\le n+m$, $n= \dim(L_1)$ and $m= \dim (L_2)$.\end{thm}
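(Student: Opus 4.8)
The plan is to reduce all six identities to a single statement about weights, and then to establish that statement directly from the weight-space decomposition. By Theorem 3.2 applied to the nilpotent Lie algebra $L_1\times L_2$ acting on $E_1\otimes E_2$, the spectrum $Sp(L_1\times L_2, E_1\otimes E_2)$ equals the set of weights of $L_1\times L_2$ for $E_1\otimes E_2$; likewise $Sp(L_i,E_i)$ is the set of weights of $L_i$ for $E_i$. Moreover, by Theorem 3.5 the spectra $\sigma_{\delta k}$ and $\sigma_{\pi k}$ all coincide with $Sp$ (for the algebra $L_1\times L_2$, of dimension $n+m$, whence the range $0\le k\le n+m$). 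Hence it suffices to prove the single weight identity: the weights of $L_1\times L_2$ for $E_1\otimes E_2$ are exactly the functionals $\gamma_{\alpha,\beta}$, $(x\otimes 1+1\otimes y)\mapsto \alpha(x)+\beta(y)$, where $\alpha$ ranges over the weights of $L_1$ for $E_1$ and $\beta$ over the weights of $L_2$ for $E_2$. Under the identification $(L_1\times L_2)^*\cong L_1^*\oplus L_2^*$, the collection of such $\gamma_{\alpha,\beta}$ is precisely the Cartesian product of the two weight sets.

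To prove the weight identity, I would start from the weight-space decompositions $E_1=\bigoplus_\alpha (E_1)_\alpha$ and $E_2=\bigoplus_\beta (E_2)_\beta$ furnished by property (ii) of Section 2, and tensor them to get $E_1\otimes E_2=\bigoplus_{\alpha,\beta}(E_1)_\alpha\otimes (E_2)_\beta$, a decomposition into $L_1\times L_2$-submodules. The central computation is that each summand $(E_1)_\alpha\otimes (E_2)_\beta$ lies in the weight space of $L_1\times L_2$ for $\gamma_{\alpha,\beta}$. Fixing $x\in L_1$ and $y\in L_2$ and using property (iv), write $x|_{(E_1)_\alpha}=\alpha(x)I+N_x$ and $y|_{(E_2)_\beta}=\beta(y)I+N_y$ with $N_x,N_y$ nilpotent. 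Then on $(E_1)_\alpha\otimes (E_2)_\beta$,
$$(x\otimes 1+1\otimes y)-\gamma_{\alpha,\beta}(x\otimes 1+1\otimes y)\,I = N_x\otimes 1+1\otimes N_y,$$
and since $N_x\otimes 1$ and $1\otimes N_y$ are commuting nilpotents their sum is nilpotent. Thus every vector of $(E_1)_\alpha\otimes (E_2)_\beta$ is a generalized eigenvector of each $x\otimes 1+1\otimes y$ with eigenvalue $\alpha(x)+\beta(y)$, which is exactly the condition that $\gamma_{\alpha,\beta}$ be a weight with $(E_1)_\alpha\otimes(E_2)_\beta\subseteq (E_1\otimes E_2)_{\gamma_{\alpha,\beta}}$.

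To finish, I would note that the $\gamma_{\alpha,\beta}$ are pairwise distinct for distinct pairs $(\alpha,\beta)$: evaluating an equality $\gamma_{\alpha,\beta}=\gamma_{\alpha',\beta'}$ on the elements $x\otimes 1$ forces $\alpha=\alpha'$, and on the elements $1\otimes y$ forces $\beta=\beta'$. Since by property (ii) the weight spaces of $L_1\times L_2$ form a direct-sum decomposition of $E_1\otimes E_2$, and the nonzero submodules $(E_1)_\alpha\otimes (E_2)_\beta$ already exhaust $E_1\otimes E_2$ while each sits inside the weight space for its own (distinct) weight $\gamma_{\alpha,\beta}$, no further weights can occur and in fact $(E_1\otimes E_2)_{\gamma_{\alpha,\beta}}=(E_1)_\alpha\otimes (E_2)_\beta$. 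This yields the Cartesian-product identity for weights and, via the reduction of the first paragraph, all three displayed equalities.

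The main obstacle I anticipate is the bookkeeping in the central computation: verifying cleanly that subtracting the scalar $\gamma_{\alpha,\beta}(x\otimes 1+1\otimes y)$ leaves the operator $N_x\otimes 1+1\otimes N_y$, and that nilpotency of commuting summands survives addition. This is the single place where property (iv) (the simultaneous upper-triangular form with constant diagonal $\alpha(x)$, resp.\ $\beta(y)$) is genuinely used; everything else is the formal reduction through Theorems 3.2 and 3.5 together with the elementary fact that distinct weights have independent weight spaces. Alternatively, one could invoke the classical statement on weights of tensor-product modules, \emph{[5, Chapter III, Proposition 4]}, in place of this computation, exactly as Theorem 4.2 invokes Proposition 3.
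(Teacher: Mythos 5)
Your proposal is correct and follows essentially the same route as the paper: decompose $E_1\otimes E_2$ into the nonzero $L_1\times L_2$-submodules $(E_1)_\alpha\otimes (E_2)_\beta$, show each is precisely the weight space for the (pairwise distinct) weight $\gamma_{\alpha,\beta}$, and then convert the weight statement into the three spectral identities via Theorems 3.2 and 3.5. The only difference is that you carry out explicitly the commuting-nilpotents computation that the paper delegates to a slight modification of [5, Chapter III, Proposition 4] --- a citation you yourself offer as an alternative at the end.
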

\begin{proof}
\indent Let us consider the decomposition of $E_1$, respectively $E_2$, as direct
sum of its weight $L_1$-submodules, respectively its weight $L_2$-submodules,
$$
E_1= \oplus_{\alpha\in \Phi} E_{1\alpha},
\hskip2cm
E_2= \oplus_{\beta\in\Psi} E_{2 \beta},
$$
where $\Phi= \{ \alpha\in L_1^*\colon \alpha \hbox{ is weight of $L_1$ for $E_1$} \}= Sp(L_1,E_1)$ 
and, $\Psi= \{ \beta\in L_2^*\colon \beta \hbox{ is weight of $L_2$ for $E_2$} \}= Sp(L_2,E_2)$.
Then,
$$
E_1\otimes E_2 =  \oplus_{(\alpha, \beta)\in \Phi\times\Psi} E_{1 \alpha}\otimes E_{2 \beta}.
$$
\indent An easy calculation shows that each $E_{1 \alpha}\otimes E_{2 \beta}$,
$(\alpha, \beta)\in \Phi\times\Psi$, is a $L_1\times L_2$-submodule of $E_1\otimes E_2$.
Besides, as $ E_{1 \alpha}\ne 0$ and $E_{2 \beta}\ne 0$, 
$E_{1 \alpha}\otimes E_{2 \beta}\ne 0$. In addition, by the definition of $\Phi$ 
and $\Psi$, if $(\alpha_1, \beta_1)\ne (\alpha_2,\beta_2)$, there exists an 
element of $L_1\times L_2$, $(x,y)$, such that $(\alpha_1 , \beta_1)(x,y)\ne (\alpha_2 ,\beta_2)(x,y)$.
Finally, in order to conclude the proof, by Theorem 3.5 and [5, Chapter II, Theorem 7], it is 
enough to prove that $E_{1 \alpha}\otimes E_{2 \beta}$ is the weight module
of $L_1\times L_2$ for the weight $(\alpha ,\beta)$. However, this fact may be verify by a 
slight modification of [5, Chapter III, Proposition 4].
\end{proof}

\bibliographystyle{amsplain}

\vskip.5cm

Enrico Boasso\par
E-mail address: enrico\_odisseo@yahoo.it

\end{document}